\numberwithin{equation}{section}
\numberwithin{figure}{section}
\theoremstyle{plain}
\newtheorem{thm}{\protect\theoremname}[section]
\theoremstyle{plain}
\newtheorem{lem}[thm]{\protect\lemmaname}
\newenvironment{proof}[1][\protect\proofname]{\par
\normalfont\topsep6\p@\@plus6\p@\relax
\trivlist
\itemindent\parindent
\item[\hskip\labelsep\scshape #1]\ignorespaces
}{%
\endtrivlist\@endpefalse
}
\providecommand{\proofname}{Proof}
\date{}
\address{
Department of Pure Mathematics\\
University of Waterloo\\
Waterloo, Ontario, N2L 3G1, Canada}
\email{ztanko@uwaterloo.ca}
\keywords{Left regular representation, group von Neumann algebra, Fourier algebra}
\subjclass{Primary 22D10, Secondary 22D25}
\providecommand{\lemmaname}{Lemma}
\providecommand{\theoremname}{Theorem}
\begin{document}

\title{cyclicity of the left regular representation of a locally compact
group}

\author{Zsolt Tanko}
\begin{abstract}
We combine harmonic analysis and operator algebraic techniques to
give a concise argument that the left regular representation of a
locally compact group is cyclic if and only if the group is first
countable, a result first proved by Greenleaf and Moskowitz.
\end{abstract}

\maketitle
Let $G$ be a locally compact group and let $\lambda$ and $\rho$
denote the (unitarily equivalent) \emph{left and right regular representations}
of $G$ on $L^{2}\left(G\right)$, respectively. The \emph{group von
Neumann algebra} $VN\left(G\right)$ is the von Neumann algebra generated
in $B\left(L^{2}\left(G\right)\right)$ by $\lambda\left(G\right)$.
It is well known that the commutant of $VN\left(G\right)$ is the
von Neumann algebra generated by $\rho\left(G\right)$. In \cite{GM1},
an operator algebraic argument viewing $VN\left(G\right)$ as arising
from a left Hilbert algebra, in combination with a reduction argument
using the structure theory of locally compact groups, is used to show
that $\lambda$ is cyclic when the group $G$ is first countable.
The converse, left open in \cite{GM1}, was established later by the
same authors in \cite{GM2}. The purpose of this note is to give a
new and more economical proof of this equivalence. Moreover, we show
that these conditions are equivalent to $\sigma$-finiteness of $VN\left(G\right)$,
the latter condition arising naturally from our techniques. In the
commutative case, it is well known that $\sigma$-finiteness of $L^{\infty}\left(G\right)$
characterizes $\sigma$-compactness of $G$, and it is our hope that
further development of the techniques we employ will yield natural
characterizations of $\sigma$-finiteness of a general locally compact
quantum group. An alternative proof of the characterization we establish,
exploiting the structure theory of locally compact groups, is given
in \cite{LR}. 

Recall that the \emph{support} of a normal state $\omega$ on a von
Neumann algebra $M$ is the minimal projection $S_{\omega}$ in $M$
for which $\left\langle \omega,S_{\omega}\right\rangle =1$ and that
$\omega$ is \emph{faithful} if $S_{\omega}=I$, the identity in $M$,
equivalently if $\omega$ takes strictly positive values on strictly
positive operators. We record some elementary facts about these concepts.
For a vector $\xi$ in a Hilbert space, let $\omega_{\xi}$ denote
the vector functional implemented by $\xi$. The notation $\left\langle X\right\rangle $
denotes the norm closed linear span of $X$.
\begin{lem}
\label{lem:Normal_state_facts}Let $\mathcal{H}$ be a Hilbert space,
let $M$ a von Neumann algebra in $B\left(\mathcal{H}\right)$, and
let $\xi,\eta\in\mathcal{H}$ be unit vectors. The following hold.

\begin{enumerate}
\item The projection $S_{\omega_{\xi}}$ has range $\left\langle M^{\prime}\xi\right\rangle $.
\item $\left\langle \omega_{\xi},S_{\omega_{\eta}}\right\rangle =0$ if
and only if $\xi$ is orthogonal to $\left\langle M^{\prime}\eta\right\rangle $.
\item A projection $P$ in $M$ satisfies $P\xi=\xi$ if and only if $S_{\omega_{\xi}}\leq P$.
\item A normal state $\omega$ on $M$ is faithful if and only if $\left\langle \omega,U\right\rangle =\left\langle \omega,I\right\rangle $
implies $U=I$, for any unitary $U$ in $M$.
\end{enumerate}
\end{lem}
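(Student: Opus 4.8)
The plan is to establish part (1) first, since the remaining three parts all reduce to it. For (1), I would observe that because $M'$ is an algebra containing $I$, the closed subspace $\langle M'\xi\rangle$ is invariant under $M'$, so the orthogonal projection $P$ onto it commutes with $M'$ and hence lies in $M''=M$. Since $\xi\in\langle M'\xi\rangle$ we have $P\xi=\xi$, whence $\left\langle\omega_\xi,P\right\rangle=\|\xi\|^2=1$. Minimality is the only point requiring a short computation: if $Q\in M$ is a projection with $\left\langle\omega_\xi,Q\right\rangle=\langle Q\xi,\xi\rangle=1=\|\xi\|^2$, then $\|Q\xi\|^2=\langle Q\xi,\xi\rangle=\|\xi\|^2$ forces $Q\xi=\xi$; as $Q$ then fixes each $a'\xi$ with $a'\in M'$, it fixes all of $\langle M'\xi\rangle$, giving $Q\geq P$. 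Thus $P=S_{\omega_\xi}$, and in particular its range is $\langle M'\xi\rangle$.

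Parts (2) and (3) then follow almost immediately. For (2), since $S_{\omega_\eta}$ is by (1) the orthogonal projection onto $\langle M'\eta\rangle$, using self-adjointness and idempotency I would write $\left\langle\omega_\xi,S_{\omega_\eta}\right\rangle=\langle S_{\omega_\eta}\xi,\xi\rangle=\|S_{\omega_\eta}\xi\|^2$, which vanishes exactly when $\xi$ is orthogonal to the range $\langle M'\eta\rangle$. For (3), if $P\xi=\xi$ then $\left\langle\omega_\xi,P\right\rangle=1$ and minimality of the support gives $S_{\omega_\xi}\leq P$; conversely, since (1) shows $S_{\omega_\xi}\xi=\xi$, the inequality $S_{\omega_\xi}\leq P$ yields $P\xi=PS_{\omega_\xi}\xi=S_{\omega_\xi}\xi=\xi$.

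For (4), I first note that $\left\langle\omega,I\right\rangle=1$ since $\omega$ is a state, so the condition reads: $\left\langle\omega,U\right\rangle=1$ forces $U=I$. If $\omega$ is faithful and $U$ is a unitary with $\left\langle\omega,U\right\rangle=1$, then using $\left\langle\omega,U^*\right\rangle=\overline{\left\langle\omega,U\right\rangle}=1$ I would compute $\left\langle\omega,(I-U)^*(I-U)\right\rangle=2-\left\langle\omega,U\right\rangle-\left\langle\omega,U^*\right\rangle=0$; faithfulness applied to the positive element $(I-U)^*(I-U)$ gives $I-U=0$. For the converse I would argue contrapositively: if $\omega$ is not faithful then $P:=I-S_\omega$ is a nonzero projection in $M$, and $U:=I-2P$ is a self-adjoint unitary in $M$ with $U\neq I$ satisfying $\left\langle\omega,U\right\rangle=1-2\left\langle\omega,P\right\rangle=1$, violating the stated implication.

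None of the steps presents a genuine obstacle; the only real content is the minimality verification in (1) and the choice of the witnessing symmetry $I-2(I-S_\omega)$ in the converse of (4). The mild subtlety throughout is that normality of the states enters only implicitly, through the very existence of the support projection as an element of $M$, which is where the von Neumann algebra structure is used.
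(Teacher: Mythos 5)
Your proof is correct and complete. Note that the paper itself offers no proof of this lemma at all --- it is stated as a record of ``elementary facts'' with the arguments left implicit --- so there is nothing to compare against except the standard treatment, which is exactly what you give: identifying $S_{\omega_{\xi}}$ with the projection onto $\left\langle M^{\prime}\xi\right\rangle$ via the double commutant theorem and the minimality computation, and witnessing non-faithfulness in (4) by the symmetry $I-2\left(I-S_{\omega}\right)$. All steps check out, including the small point that $\left\langle \omega,P\right\rangle =0$ for $P=I-S_{\omega}$ and the use of $\left\langle \omega,U^{*}\right\rangle =\overline{\left\langle \omega,U\right\rangle }$ in the forward direction of (4).
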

Motivated by the following simple observation, we choose to characterize
cyclicity of the right regular representation.
\begin{lem}
\label{lem:Cyclic_iff_faithful}Let $G$ be a locally compact group.
A vector $\xi\in L^{2}\left(G\right)$ is cyclic for $\rho$ if and
only if $\omega_{\xi}$ is faithful on $VN\left(G\right)$.
\end{lem}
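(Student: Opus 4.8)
The plan is to translate cyclicity of $\rho$ into a statement about how the commutant $VN(G)'=\rho(G)''$ acts on $\xi$, and then to read off faithfulness directly from Lemma~\ref{lem:Normal_state_facts}~(1). Since neither cyclicity nor faithfulness of $\omega_\xi$ is affected by a nonzero scalar, I may assume $\xi$ is a unit vector, so that $\omega_\xi$ is a normal state and its support is defined.

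First I would recall, as noted in the introduction, that $VN(G)'$ is the von Neumann algebra generated by $\rho(G)$, i.e. $VN(G)'=\rho(G)''$. By definition $\xi$ is cyclic for the representation $\rho$ exactly when $\left\langle \rho(G)\xi\right\rangle = L^{2}(G)$, so the first substantive step is to identify $\left\langle \rho(G)\xi\right\rangle$ with $\left\langle VN(G)'\xi\right\rangle$. Writing $P$ for the orthogonal projection onto $\left\langle \rho(G)\xi\right\rangle$, this subspace is invariant under every $\rho(g)$, and since $\rho(g)^{*}=\rho(g^{-1})$ it is reducing, whence $P\in\rho(G)'$. Because $VN(G)'=\rho(G)''$ commutes with $\rho(G)'$, the projection $P$ commutes with $VN(G)'$; thus $\left\langle \rho(G)\xi\right\rangle$ is invariant under $VN(G)'$ and, containing $\xi$, contains $\left\langle VN(G)'\xi\right\rangle$. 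The reverse containment is immediate from $\rho(G)\subseteq VN(G)'$, so the two closed spans coincide and cyclicity of $\xi$ for $\rho$ is equivalent to $\left\langle VN(G)'\xi\right\rangle = L^{2}(G)$.

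Taking $M=VN(G)$, so that $M'=VN(G)'$, Lemma~\ref{lem:Normal_state_facts}~(1) tells me that the support $S_{\omega_\xi}$ is precisely the projection with range $\left\langle VN(G)'\xi\right\rangle$. Consequently $\xi$ is cyclic for $\rho$ if and only if $\left\langle VN(G)'\xi\right\rangle = L^{2}(G)$, if and only if $S_{\omega_\xi}$ has full range, i.e. $S_{\omega_\xi}=I$, which is exactly the assertion that $\omega_\xi$ is faithful on $VN(G)$. I expect the only point requiring any care to be the identification of $\left\langle \rho(G)\xi\right\rangle$ with $\left\langle VN(G)'\xi\right\rangle$, namely the passage from cyclicity for the representation $\rho$ to cyclicity for the von Neumann algebra it generates; this is the routine commutant argument above, after which the equivalence follows at once from the support description in Lemma~\ref{lem:Normal_state_facts}.
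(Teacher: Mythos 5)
Your proof is correct and follows essentially the same route as the paper: identify $\left\langle \rho(G)\xi\right\rangle$ with $\left\langle VN(G)'\xi\right\rangle$ and then apply Lemma~\ref{lem:Normal_state_facts}~(1) to equate cyclicity with $S_{\omega_\xi}=I$. The only cosmetic difference is that the paper justifies the identification of the two closed spans by the strong operator density of $\mathrm{span}\,\rho(G)$ in $VN(G)'$, whereas you use the equivalent reducing-subspace/commutant argument.
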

\begin{proof}
For $\xi\in L^{2}\left(G\right)$ we have $\left\langle \rho\left(G\right)\xi\right\rangle =\left\langle VN\left(G\right)^{\prime}\xi\right\rangle $
since $\mbox{span}\rho\left(G\right)$ is strong operator topology
dense in $VN\left(G\right)^{\prime}$. Consequently, the vector $\xi$
is cyclic for $\rho$ if and only if $\left\langle VN\left(G\right)^{\prime}\xi\right\rangle =L^{2}\left(G\right)$.
As $S_{\omega_{\xi}}$ has range $\left\langle VN\left(G\right)^{\prime}\xi\right\rangle $,
the latter assertion is exactly that $S_{\omega_{\xi}}=I$.
\end{proof}
Let $A\left(G\right)$ denote the \emph{Fourier algebra} of a locally
compact group $G$, which is the predual of $VN\left(G\right)$, and
for $T\in VN\left(G\right)$ and $u\in A\left(G\right)$ define $T\check{\cdot}u\in A\left(G\right)$
by 
\begin{eqnarray*}
\left\langle T\check{\cdot}u,S\right\rangle =\left\langle u,\check{T}S\right\rangle  &  & \left(S\in VN\left(G\right)\right),
\end{eqnarray*}
where $\check{T}$ is the image of $T$ under the adjoint of the check
map $u\mapsto\check{u}$ on $A\left(G\right)$ (here, $\check{u}\left(s\right)=u\left(s^{-1}\right)$).
See \cite[p.213]{Eymard}. Proposition 3.17 of \cite{Eymard} shows
that for $u\in A\left(G\right)\cap L^{2}\left(G\right)$ we have $T\check{\cdot}u=Tu$,
where the right hand side is evaluation of the operator $T$ at the
vector $u$ in $L^{2}\left(G\right)$. This fact is needed in the
following lemma, which is key to establishing the main result.
\begin{lem}
\label{lem:Proj_rng_cts_fun}Let $G$ be a locally compact group.
Every nonzero projection in $VN\left(G\right)$ has a nonzero continuous
function in its range.
\end{lem}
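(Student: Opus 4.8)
The plan is to produce the required function by applying $P$ to a vector already known to be a continuous function, arranging matters so that the output stays continuous. The key observation is that elements of $A\left(G\right)$ are (continuous) functions on $G$, and that the action $T\check{\cdot}u$ keeps us inside $A\left(G\right)$ while agreeing with operator evaluation on $A\left(G\right)\cap L^{2}\left(G\right)$, by the quoted consequence of Proposition 3.17 of \cite{Eymard}.

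First I would record that $A\left(G\right)\cap L^{2}\left(G\right)$ is dense in $L^{2}\left(G\right)$. This is standard: $A\left(G\right)$ contains $C_{c}\left(G\right)\ast C_{c}\left(G\right)$, whose members are continuous and compactly supported and hence lie in $A\left(G\right)\cap L^{2}\left(G\right)$; and $C_{c}\left(G\right)\ast C_{c}\left(G\right)$ is dense in $L^{2}\left(G\right)$ by an approximate identity argument.

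Now let $P$ be a nonzero projection in $VN\left(G\right)$. Since $P\neq0$ is bounded, it cannot vanish on a dense subspace, so the density just noted yields some $u\in A\left(G\right)\cap L^{2}\left(G\right)$ with $Pu\neq0$; indeed, were $Pu=0$ for every such $u$, then $P$ would vanish on all of $L^{2}\left(G\right)$. For this $u$ the cited fact gives $Pu=P\check{\cdot}u$, and $P\check{\cdot}u$ belongs to $A\left(G\right)$ by the definition of the action. Thus the $L^{2}$-vector $Pu$ is represented by the element $P\check{\cdot}u$ of $A\left(G\right)$, that is, by a nonzero continuous function. Since $P\left(Pu\right)=Pu$, this continuous function lies in the range of $P$, which is exactly what we want.

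The argument is short once one has the right vantage point, and I expect the only genuine idea to be the decision to feed $P$ vectors drawn from $A\left(G\right)\cap L^{2}\left(G\right)$, so that the stated identity forces the image back into the Fourier algebra and hence into the continuous functions. The density of $A\left(G\right)\cap L^{2}\left(G\right)$ in $L^{2}\left(G\right)$ and the identification of the vector $Pu$ with the continuous function $P\check{\cdot}u$ are then routine given the facts quoted above.
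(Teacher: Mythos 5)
Your proof is correct, and it rests on the same key ingredient as the paper's --- the identity $T\check{\cdot}u=Tu$ for $u\in A\left(G\right)\cap L^{2}\left(G\right)$ from Proposition 3.17 of \cite{Eymard}, which forces $Pu$ to be represented by an element of $A\left(G\right)$ and hence by a continuous function --- but you arrange the nonvanishing step differently, and more simply. The paper fixes a unit vector $\xi$ in the range of $P$, chooses a positive $f\in\mathcal{C}_{c}\left(G\right)$ with $\left\langle \omega_{f},S_{\omega_{\xi}}\right\rangle \neq0$, and then shows $S_{\omega_{\xi}}\check{\cdot}\omega_{f}$ is nonzero by evaluating it at the identity via the pairing with $\lambda\left(e\right)$ and the symmetry $\check{\omega_{f}}=\omega_{f}$; this requires the support-projection facts of Lemma \ref{lem:Normal_state_facts}. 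You instead observe that $A\left(G\right)\cap L^{2}\left(G\right)$ is dense in $L^{2}\left(G\right)$ --- it contains $\mathcal{C}_{c}\left(G\right)\ast\mathcal{C}_{c}\left(G\right)$, which is dense by an approximate identity argument, and this is standard and correct --- so the bounded nonzero operator $P$ cannot annihilate every such $u$, and any $u$ with $Pu\neq0$ immediately yields the nonzero continuous function $P\check{\cdot}u=Pu$ in the range of $P$. Your route bypasses Lemma \ref{lem:Normal_state_facts} entirely in this step and is shorter; what the paper's computation buys in exchange is slightly more information, namely a continuous function lying in the range of the smaller projection $S_{\omega_{\xi}}$ and nonvanishing at the identity specifically, but none of that extra precision is used in the main theorem, so your version would serve there equally well.
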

\begin{proof}
Let $P\in VN\left(G\right)$ be a nonzero projection and choose a
unit vector $\xi$ in its range. Since positive functions span $\mathcal{C}_{c}\left(G\right)$,
which is in turn dense in $L^{2}\left(G\right)$, we may find a positive
$f\in\mathcal{C}_{c}\left(G\right)$ of norm one in $L^{2}\left(G\right)$
and not orthogonal to $\left\langle \rho\left(G\right)\xi\right\rangle $,
so that $\left\langle \omega_{f},S_{\omega_{\xi}}\right\rangle \neq0$
by Lemma \ref{lem:Normal_state_facts}. The function $\omega_{f}$
in $A\left(G\right)$ is positive definite and pointwise positive,
so that $\check{\omega_{f}}=\omega_{f}$, and is in $A\left(G\right)\cap L^{2}\left(G\right)$
because $f$ has compact support, whence 
\[
S_{\omega_{\xi}}\left(\omega_{f}\right)\left(e\right)=\left(S_{\omega_{\xi}}\check{\cdot}\omega_{f}\right)\left(e\right)=\left\langle S_{\omega_{\xi}}\check{\cdot}\omega_{f},\lambda\left(e\right)\right\rangle =\left\langle \omega_{f},\check{S_{\omega_{\xi}}}\right\rangle =\left\langle \check{\omega_{f}},S_{\omega_{\xi}}\right\rangle =\left\langle \omega_{f},S_{\omega_{\xi}}\right\rangle \neq0.
\]
Thus $S_{\omega_{\xi}}\left(\omega_{f}\right)=S_{\omega_{\eta}}\check{\cdot}\omega_{f}$
is nonzero and in $A\left(G\right)$, hence continuous, and is in
the range of $P$ because $S_{\omega_{\xi}}\leq P$, by Lemma \ref{lem:Normal_state_facts}. 
\end{proof}
\begin{thm}
Let $G$ be a locally compact group. The following are equivalent:

\begin{enumerate}
\item $G$ is first countable.
\item VN$\left(G\right)$ is $\sigma$-finite.
\item The left (equivalently, right) regular representation is cyclic.
\end{enumerate}
\end{thm}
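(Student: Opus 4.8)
The plan is to prove $(1)\Rightarrow(2)$, $(2)\Rightarrow(3)$, $(3)\Rightarrow(2)$, and the substantive new implication $(2)\Rightarrow(1)$; together these give $(1)\Leftrightarrow(2)\Leftrightarrow(3)$. The implication $(3)\Rightarrow(2)$ is immediate: if $\xi$ is cyclic for $\rho$ then $\omega_\xi$ is faithful on $VN(G)$ by Lemma \ref{lem:Cyclic_iff_faithful}, and a von Neumann algebra carrying a faithful normal state is $\sigma$-finite.

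For $(1)\Rightarrow(2)$, first countability makes $G$ metrizable, so the identity has a countable decreasing basis of relatively compact symmetric neighborhoods, along which I choose a normalized positive approximate identity $(f_n)$ in $\mathcal C_c(G)$. The key point is that $(f_n)$ is a \emph{cyclic sequence} for $\rho$: unwinding $\langle g,\rho(s)f_n\rangle$ as a convolution and letting $n\to\infty$, the approximate-identity property forces any $g\in L^2(G)$ orthogonal to every $\rho(s)f_n$ to vanish, so $\sum_n\langle\rho(G)f_n\rangle$ is dense. Consequently the normal state $\omega=\sum_n 2^{-n}\omega_{f_n}$ has support $\sup_n S_{\omega_{f_n}}$, whose range is $\overline{\sum_n\langle\rho(G)f_n\rangle}=L^2(G)$ by Lemma \ref{lem:Normal_state_facts}; hence $\omega$ is faithful and $VN(G)$ is $\sigma$-finite.

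The implication $(2)\Rightarrow(3)$ is where the concrete structure of the representation is essential, since $\sigma$-finiteness of an abstract von Neumann algebra need not produce a separating vector in an arbitrary representation. Here $VN(G)$ acts on $L^2(G)$ in standard form (arising from the left Hilbert algebra $\mathcal C_c(G)$), so a faithful normal state is realized by a vector $\xi$ in the natural cone that is simultaneously cyclic and separating, and by Lemma \ref{lem:Cyclic_iff_faithful} such $\xi$ is cyclic for $\rho$. It is precisely at this step that Lemma \ref{lem:Proj_rng_cts_fun} supplies an economical, hands-on substitute for the full standard-form machinery, since it lets one exhaust $I$ by orthogonal support projections whose ranges contain honest continuous functions.

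The main obstacle is $(2)\Rightarrow(1)$. Given $\sigma$-finiteness I again produce a cyclic and separating unit vector $\xi$ and form the coefficient $u(s)=\langle\rho(s)\xi,\xi\rangle$, which lies in $A(G)\subseteq C_0(G)$ since $\rho$ is unitarily equivalent to $\lambda$. I claim the open sets $W_n=\{s:\|\rho(s)\xi-\xi\|<1/n\}$ form a countable neighborhood basis at $e$, yielding first countability. The crux is to rule out a net $s_j\notin W$, for some fixed neighborhood $W$ of $e$, with $\rho(s_j)\xi\to\xi$: because $u\in C_0(G)$, the condition $\rho(s_j)\xi\to\xi$ confines $s_j$ eventually to the compact superlevel set $\{|u|\geq 1/2\}$, so a subnet converges to some $s_0$; strong continuity of $\rho$ gives $\rho(s_0)\xi=\xi$, and since $\xi$ is separating this forces $s_0=e$, contradicting $s_j\notin W$. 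Verifying that $\{|u|\geq 1/2\}$ is compact from the vanishing of $u$ at infinity, and that the separating property upgrades $\rho(s_0)\xi=\xi$ to $s_0=e$, are the two points that require care.
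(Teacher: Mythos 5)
Your proposal is correct in substance, but it routes two of the three substantive implications differently from the paper, and in both cases the paper's route is lighter. For $(1)\Rightarrow(2)$ you build a cyclic sequence $(f_{n})$ from an approximate identity and compute the support of $\omega=\sum_{n}2^{-n}\omega_{f_{n}}$ directly as $\sup_{n}S_{\omega_{f_{n}}}$, using the convolution identity $\langle g,\rho(s)f_{n}\rangle\to 0\Rightarrow g=0$; this works (and bypasses metrizability, which you do not actually need --- first countability plus local compactness already gives the decreasing relatively compact base), but the paper instead takes $\omega_{n}=|U_{n}|^{-1}\omega_{\chi_{U_{n}}}$ and proves faithfulness by testing the range projection of a null positive operator against continuous functions, which is exactly where Lemma \ref{lem:Proj_rng_cts_fun} is consumed. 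Your aside that Lemma \ref{lem:Proj_rng_cts_fun} serves as a substitute for standard-form machinery in $(2)\Rightarrow(3)$ misplaces its role: the paper's $(2)\Leftrightarrow(3)$ is a one-line appeal to Eymard's theorem that normal states on $VN(G)$ are positive definite functions in $A(G)$ and hence vector states, with no standard form and no Lemma \ref{lem:Proj_rng_cts_fun} involved. Your invocation of the natural cone is valid but is the heaviest tool in your argument, and the vague remark about ``exhausting $I$ by orthogonal support projections'' is not itself a proof, so you should either commit to the standard form citation or adopt the $A(G)$ argument. For $(2)\Rightarrow(1)$ your argument is essentially the Neufang--Runde one the paper reproduces; you obtain the needed compactness from $u\in A(G)\subseteq C_{0}(G)$ via the superlevel set $\{|u|\geq\frac{1}{2}\}$, whereas the paper simply intersects with a fixed compact neighborhood $K$ and uses Lemma \ref{lem:Normal_state_facts}(4) to see that $\omega(s)=1$ forces $s=e$; your version needs $\xi$ separating for $VN(G)^{\prime}$ (equivalently cyclic for $VN(G)$) to pass from $\rho(s_{0})\xi=\xi$ to $\rho(s_{0})=I$, which the standard form does supply, so the step closes, but note it is the cyclicity of $\xi$ for $VN(G)$, not merely the separating property for $VN(G)$, that you are using there.
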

\begin{proof}
Suppose (1) holds. Let $\left(U_{n}\right)_{n=1}^{\infty}$ be a countable
neighborhood base at the identity in $G$ and define $\omega_{n}=\left|U_{n}\right|^{-1}\omega_{\chi_{U_{n}}}$.
We show that the normal state $\omega=\sum_{n=1}^{\infty}2^{-n}\omega_{n}$
is faithful. Let $T$ be a positive operator in $VN\left(G\right)$
with $\left\langle \omega,T\right\rangle =0$ and let $P$ be the
range projection of $T$, so that $\left\langle \omega,P\right\rangle =0$
(see, e.g., \cite[Remark 7.2.5]{KRII}). Given any vector $\eta$
in the range of $T$ we have $S_{\omega_{\eta}}\leq P$ and thus $0\leq\left\langle \omega_{n},S_{\omega_{\eta}}\right\rangle \leq\left\langle \omega_{n},P\right\rangle \leq\left\langle \omega,P\right\rangle =0$,
implying that $\eta$ is orthogonal to $\left\langle \rho\left(G\right)\chi_{U_{n}}\right\rangle $
for each $n\geq1$. If $\eta$ is continuous, then
\[
\eta\left(s\right)=\lim_{n}\left|U_{n}s\right|^{-1}\int_{U_{n}s}\eta=\lim_{n}\left|U_{n}s\right|^{-1}\left\langle \eta|\rho\left(s^{-1}\right)\chi_{U_{n}}\right\rangle \Delta\left(s\right)^{\frac{1}{2}}=0
\]
for every $s\in G$. Thus $P=0$ by Lemma \ref{lem:Proj_rng_cts_fun},
hence $T=0$ and $\omega$ is faithful, so (2) holds.

Normal states on $VN\left(G\right)$, being positive definite functions
in $A\left(G\right)$, are vector states, so that statements (2) and
(3) are equivalent by Lemma \ref{lem:Cyclic_iff_faithful}.

We provide the argument of \cite{NR} establishing that (2) implies
(1). Suppose (2) holds and let $\omega$ be a faithful normal state
on $VN\left(G\right)$. Fix a compact neighborhood $K$ of the identity
in $G$ and let $V$ be any open neighborhood of the identity contained
in $K$. We show that the sets $U_{n}=\left\{ s\in K:\left|\omega\left(s\right)-1\right|<\frac{1}{n}\right\} $
form a neighborhood base at the identity, for which it suffices to
establish that $U_{n}$ is contained in $V$ for some $n\geq1$. For
any $s\in G$ with $\omega\left(s\right)=1$, Lemma \ref{lem:Normal_state_facts}
entails that $s=e$, since $\omega\left(s\right)=\left\langle \omega,\lambda\left(s\right)\right\rangle $.
Compactness of $K\setminus V$ then implies that $\epsilon=\inf\left\{ \left|\omega\left(s\right)-1\right|:s\in K\setminus V\right\} $
is strictly positive. Choosing $N\geq1$ with $\frac{1}{N}<\epsilon$,
if $s\in U_{N}$, then that $s\in K$ and $\left|\omega\left(s\right)-1\right|<\epsilon$
together imply that $s\in V$. Thus $U_{N}\subset V$, as required.
\end{proof}

\end{document}